\newtheorem{theorem}{Theorem}[section]
\newtheorem{conjecture}{Conjecture}
\newtheorem*{theorem*}{Theorem}
\newtheorem*{remark*}{Remark}
\newtheorem*{problem*}{Problem}
\newtheorem*{conjecture*}{Conjecture}
\newtheorem*{question*}{Question}
\newtheorem{lemma}[theorem]{Lemma}
\newtheorem{example}{Example}
\newcommand{\rom}[1]{\uppercase\expandafter{\romannumeral #1\relax}}
\begin{document}

\title[On the generalized Brauer-Siegel theorem]{On the generalized Brauer-Siegel theorem\\ for asymptotically exact families of number fields\\ with solvable Galois closure}

\author[Anup B. Dixit]{Anup B. Dixit}
\address{Department of Mathematics and Statistics\\ Queen's University\\ Jeffrey Hall, 48 University Ave,\\ Kingston,\\ Canada, ON\\ K7L 3N8}
\email{abd6@queensu.ca}

\date{}

\begin{abstract}
In 2002, M. A. Tsfasman and S. G. Vladut \cite{TV} formulated the generalized Brauer-Siegel conjecture for asymptotically exact families of number fields. In this article, we establish this conjecture for asymptotically good towers and asymptotically bad families of number fields with solvable normal closure.
\end{abstract}

\subjclass[2010]{11M41}

\keywords{Brauer-Siegel theorem, Asymptotically exact families, Dedekind zeta function, class number}

\maketitle

\section{\bf Introduction}
\medskip

Let $K$ be an algebraic number field. Denote the class number of $K$ by $h_K$, the order of the ideal class group of $K$. It is an important theme in number theory to understand how $h_K$ varies on varying $K$. A prelude to this problem is Gauss's conjecture, settled independently by  Heegner \cite{Heeg}, Stark \cite{Stk2} and Baker \cite{Bak}, which states that there are exactly $9$ imaginary quadratic fields with class number $1$. Suppose $\mathcal{K} = \{K_i\}_{i\in\mathbb{N}}$ is a sequence of number fields. We call $\mathcal{K}$ to be a \textit{family} if $K_i\neq K_j$ for $i\neq j$. Gauss also predicted that in a family of imaginary quadratic fields, the class number $h_K$ must tend to infinity. This was shown by Heilbronn \cite{Heil} in 1934. This sparked the beginning of the study of asymptotic behaviour of the class number in a family of number fields. An immediate consequence of Heilbronn's result is that there are finitely many imaginary quadratic fields with a bounded class number.\\

However, the same phenomena is not expected to hold for any general family of number fields. For instance, it is still unknown whether there are infinitely many real quadratic fields with class number $1$, although it is widely believed to be true. One of the difficulties in bounding the class number is isolating it from the regulator of the number field. This was observed by Siegel \cite{Sie} in 1935. He showed that for a family of quadratic fields $\{K_i\}$, the class number times the regulator  $h_{K_i} R_{K_i}$ tends to infinity as $i\to \infty$. In other words, there are finitely many quadratic fields with bounded $h_K R_K$. In the case of real quadratic fields, the regulator is the $\log$ of the fundamental unit, where as in the case of imaginary quadratic fields, the regulator is $1$. Hence, this is a generalization of Heilbronn's result.\\

Furthermore, Siegel also established that if $\{K_i\}$ is a family of quadratic fields, then
\begin{equation*}
\lim_{i\to\infty} \frac{\log h_{K_i} R_{K_i}}{\log\sqrt{d_{K_i}}} = 1,
\end{equation*}
where $d_{K_i}$ denotes the absolute value of the discriminant $|disc(K/\mathbb{Q})|$. By Minkowski's theorem, we know that there are finitely many number fields with bounded discriminant. Hence, Siegel's result provides a rate at which $h_K R_K$ goes to infinity. Brauer \cite{BS} generalized this result to families of number fields, that are Galois over $\mathbb{Q}$. This is known as the classical Brauer-Siegel theorem. More precisely, he showed the following.
\begin{theorem}[Brauer]
Let $\{K_i\}$ be a family of number fields such that $K_i/\mathbb{Q}$ is Galois for all $i$. Denote by $n_{K_i}$ the degree $[K_i: \mathbb{Q}]$. If 
\begin{equation*}
    \lim_{i\to\infty} d_{K_i}^{1/n_{K_i}} = \infty,
\end{equation*} 
then 
\begin{equation}\label{brauer-siegel}
\lim_{i\to\infty} \frac{\log h_{K_i} R_{K_i}}{\log\sqrt{d_{K_i}}} = 1.
\end{equation}
Moreover, the condition $K_i/\mathbb{Q}$ being Galois can be dropped under the assumption of generalized Riemann hypothesis (GRH). 
\end{theorem}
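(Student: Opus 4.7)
The plan is to start from the analytic class number formula
$$\kappa_K := \mathrm{Res}_{s=1}\zeta_K(s) = \frac{2^{r_1}(2\pi)^{r_2}\, h_K R_K}{w_K\sqrt{d_K}},$$
which on taking logarithms and rearranging yields
$$\frac{\log h_K R_K}{\log\sqrt{d_K}} = 1 + \frac{\log \kappa_K}{\log\sqrt{d_K}} + \frac{r_1\log 2 + r_2\log(2\pi) - \log w_K}{\log\sqrt{d_K}}.$$
The final error term is $O(n_K/\log d_K)$, and the hypothesis $d_{K_i}^{1/n_{K_i}}\to\infty$ is equivalent to $n_{K_i}/\log d_{K_i}\to 0$, so it vanishes in the limit. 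The theorem therefore reduces to the two-sided asymptotic $\log \kappa_{K_i} = o(\log d_{K_i})$.

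The upper bound $\log\kappa_K \leq o(\log d_K)$ is unconditional and follows from a standard Landau-type convexity estimate, which gives $\kappa_K \ll (\log d_K)^{n_K-1}$ and hence $\log\kappa_K = O(n_K\log\log d_K) = o(\log d_K)$ by hypothesis. The substantive content of the theorem, and the place where the Galois assumption enters, is the matching lower bound $\log\kappa_K \geq -o(\log d_K)$.

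For this lower bound I would follow Brauer's original strategy. Since $K/\mathbb{Q}$ is Galois, the Aramata--Brauer theorem guarantees that $\zeta_K(s)/\zeta(s)$ is entire, and Brauer's induction theorem expresses $\zeta_K(s)$ as an integer-power product of Hecke $L$-functions $L(s,\chi)$ attached to one-dimensional characters of subgroups of $\mathrm{Gal}(K/\mathbb{Q})$, whose conductors are controlled by $d_K$. This transports the lower bound on $\kappa_K$ to lower bounds on $L(1,\chi)$ for abelian Hecke characters. Away from a potential real Siegel zero such bounds are classical, while the Siegel-zero case is treated by Siegel's ineffective dichotomy: either no $L(s,\chi)$ attached to the family admits an exceptional zero, in which case a direct estimate suffices, or such an exceptional character exists, and one exploits positivity of the Dirichlet coefficients of the Dedekind zeta function of the compositum of $K_i$ with the fixed exceptional field to conclude $L(1,\chi)\gg_{\varepsilon} d_K^{-\varepsilon}$ for every $\varepsilon>0$.

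The Siegel-zero dichotomy is the main obstacle: it is what renders the resulting theorem ineffective and it is precisely the ingredient that genuinely requires the Galois hypothesis, since without it the factorization of $\zeta_K(s)$ into abelian $L$-functions is unavailable. Under GRH the dichotomy vanishes, because Siegel zeros are excluded and polylogarithmic lower bounds for $L(1,\chi)$ become available directly for any number field, which is why the Galois condition may then be dropped.
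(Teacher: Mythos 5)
This theorem is stated in the paper only as background and is not proved there; it is cited from Brauer's 1947 paper, so there is no internal proof to compare against. Your outline is the standard (Brauer's original) argument: reduce via the class number formula to $\log\kappa_{K}=o(\log d_{K})$, dispose of the upper bound by an elementary estimate, and obtain the lower bound from the Aramata--Brauer theorem plus Brauer induction, which writes $\kappa_{K}$ as a product of integer powers of $L(1,\chi)$ for abelian Hecke characters, with the Siegel-zero dichotomy supplying the (ineffective) lower bound for the exceptional real character. The identification of the exceptional-zero dichotomy as the source of ineffectiveness and of the Galois hypothesis, and of GRH as removing it, is also correct. (Minor: the sign of the $r_1\log 2+r_2\log(2\pi)-\log w_K$ term is flipped, but it is an $O(n_K/\log d_K)$ error term either way.)

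One step as written does fail. From $\kappa_K\ll(\log d_K)^{n_K-1}$ you conclude $\log\kappa_K=O(n_K\log\log d_K)=o(\log d_K)$ ``by hypothesis,'' but the hypothesis only gives $n_K=o(\log d_K)$, and that does not imply $n_K\log\log d_K=o(\log d_K)$: take $n_K\sim\log d_K/\sqrt{\log\log d_K}$, which is compatible with Minkowski's bound and with $d_K^{1/n_K}\to\infty$, yet makes $n_K\log\log d_K/\log d_K\to\infty$. The repair is standard: use the sharper convexity bound in which the degree appears inside the logarithm, e.g. Louboutin's $\rho_K\le(1-\beta)\bigl(\tfrac{e\,g_K}{2n_K}\bigr)^{n_K}$ (Theorem \ref{upbound} of this paper) or the classical $\kappa_K\ll\bigl(\tfrac{c\log d_K}{n_K}\bigr)^{n_K-1}$. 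Writing $t=n_K/\log d_K\to 0$, this gives $\log\kappa_K/\log d_K\ll t\log(c/t)\to 0$, which is what you need. With that substitution the outline is the correct classical proof.
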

The reason $h_K R_K$ appears in the above result is because of the class number formula. Recall the Dirichlet class number formula, which states that if $\rho_K$ denotes the residue of the Dedekind zeta-function $\zeta_K(s)$ at $s=1$, then
\begin{equation}\label{class-number formula}
    \rho_K = \frac{2^{r_1} (2\pi)^{r_2} h_KR_K}{\omega_K \sqrt{d_K}},
\end{equation}
where $r_1$ denotes the number of real embeddings and $r_2$ denotes the number of complex embeddings up to conjugation of $K$, and $\omega_K$ denotes the number of roots of unity in $K$. Using the class number formula, it is easy to see that equation \eqref{brauer-siegel} is equivalent to
\begin{equation}\label{brauer-siegel-2}
    \lim_{i\to\infty} \frac{\log \rho_{K_i}}{\log \sqrt{d_{K_i}}} =0.
\end{equation}

Now one would hope to show \eqref{brauer-siegel-2}, relying on the analytic behaviour of $\zeta_K(s)$ for certain families of number fields. The key is to be able to find a zero-free region of $\zeta_K(s)$ near $s=1$ for all $K$ in the family. In 1974, Stark \cite{Stk} exploited this idea to prove the Brauer-Siegel theorem for families of almost normal number fields, which do not contain any quadratic fields and also obtained effective growth of the class number $h_K$ for certain families of CM-fields.  A more extensive study of the Brauer-Siegel theorem, where the condition $d_{K_i}^{1/n_{K_i}} \to \infty$ can be dropped, was carried out by Tsfasman-Vl\u{a}du\c{t} \cite{TV} in 2002. They formulated \textit{the generalized Brauer-Siegel conjecture} for asymptotically exact families and proved it in the case of asymptotically good towers of almost normal number fields. The precise statement of their conjecture and the details of the background required will be discussed in Section 2. In 2005, Zykin \cite{Zyk} proved that the generalized Brauer-Siegel conjecture holds for asymptotically bad families of almost normal number fields.\\

In this paper, we prove the generalized Brauer-Siegel conjecture for asymptotically good towers as well as asymptotically bad families of number fields with solvable Galois closure. The main ingredient used is the result of V. K. Murty \cite{Km} on the zero-free region for $\zeta_K(s)$ near $s=1$, when $K$ has solvable Galois closure.

\section{\bf Notation}
\medskip

Let $K$ be a number field. We say $K$ is \textit{almost normal} if there exists a sequence of number fields $\{K_i\}$ such that 
\begin{equation*}
    \mathbb{Q} = K_0 \subseteq K_1\subseteq\cdots\subseteq K_n = K,
\end{equation*}
with all the $K_i/K_{i-1}$ normal, $1\leq i\leq n$.

Denote by $h_K$ the class number of $K$, $d_K$ the absolute value of the discriminant $|disc(K/\mathbb{Q})|$ and $R_K$ the regulator of $K$.
Define the genus of $K$ as
\begin{equation*}
g_K := \log \sqrt{d_K}.
\end{equation*}
Let $N_q(K)$ denote the number of non-archimedean places $v$ of $K$ such that $Norm(v)=q$.

For a number field $K/\mathbb{Q}$, the Dedekind zeta-function is defined as
\begin{equation*}
\zeta_K(s):= \prod_{\mathfrak{P}\subset \mathcal{O}_K} \left( 1- N\mathfrak{P}^{-s}\right)^{-1},
\end{equation*}
for $\Re(s)>1$, where $\mathfrak{P}$ runs over all non-zero prime ideals in the ring of integers of $K$. This can be re-written as
\begin{equation*}
    \zeta_K(s) = \prod_q \left(1- q^{-s}\right)^{-N_q(K)},
\end{equation*}
for $\Re(s)>1$, where $q$ runs over all prime powers. $\zeta_K(s)$ has an analytic continuation to the whole complex plane except for a simple pole at $s=1$ with residue $\rho_K$. Additionally, $\zeta_K(s)$ satisfies a functional equation akin to the Riemann zeta-function $\zeta(s)$, invariant under $s\mapsto 1-s$. Owing to the Euler product, $\zeta_K(s)\neq 0$ for $\Re(s)>1$. Using the functional equation, it can be shown that the only zeros of $\zeta_K(s)$ in $\Re(s)<0$ are the trivial zeros. The famous \textit{generalized Riemann hypothesis} (GRH) asserts that if $\zeta_K(s)=0$ and $0 \leq \Re(s)\leq 1$, then $\Re(s)=1/2$. In certain applications, the assumption of GRH can often be replaced by a weaker hypothesis of a zero-free region of $\zeta_K(s)$ near $s=1$. If there exists a real zero $\beta$ of $\zeta_K(s)$ satisfying
\begin{equation*}
    1-\frac{1}{4\, \log d_K} \leq \beta < 1,
\end{equation*}
then we say that $\beta$ is an \textit{exceptional} zero $\zeta_K(s)$. It is known that for any $\zeta_K(s)$, there is at most one such exceptional zero. In fact, the best known result in this context is due to Louboutin \cite{Loub3}, proving that there is at most one exceptional zero, $\beta$, satisfying
\begin{equation}\label{louboutin-zero}
    1>\beta \geq 1 - \frac{1}{c_0 \,g_K}, \hspace{2mm} \text{ where }\hspace{2mm}
    c_0 =  \frac{2\,(3+2\sqrt{2})}{5+\sqrt{5}} = 1.61\cdots.
\end{equation}

Suppose $\mathcal{K} = \{K_i\}_{i\in\mathbb{N}}$ is a sequence of number fields. We call $\mathcal{K}$ to be a family if $K_i\neq K_j$ for $i\neq j$. Moreover, we call  $\mathcal{K}$ to be a \textit{tower} if $K_i\subsetneq K_{i+1}$ for all $i$. We say that a family $\mathcal{K}$ is \textit{asymptotically exact} if the limits
\begin{equation*}
\phi_{\mathbb{R}} := \lim_{i\to \infty} \frac{r_1(K_i)}{g_{K_i}}, \hspace{5mm} \phi_{\mathbb{C}} := \lim_{i\to \infty} \frac{r_2(K_i)}{g_{K_i}},\hspace{5mm}
\phi_q := \lim_{i\to\infty} \frac{N_q(K_i)}{g_{K_i}}
\end{equation*}
exist for all prime powers $q$, where $r_1(K_i)$ and $r_2(K_i)$ are the number of real and complex embeddings of $K_i$ respectively.

We say that an asymptotically exact family $\mathcal{K} = \{K_i\}$ is \textit{asymptotically bad}, if $\phi_q = \phi_{\mathbb{R}} = \phi_{\mathbb{C}} =0$ for all prime powers $q$. This is equivalent to saying that the root discriminant $ d_{K_i}^{1/n_{K_i}}$ tends to infinity as $i\to \infty$. If an asymptotically exact family $\mathcal{K}$ is not asymptotically bad, we say that it is \textit{asymptotically good}.

The generalized Brauer-Siegel conjecture, as formulated by Tsfasman-Vl\u{a}du\c{t} \cite{TV} is as follows. 
\begin{conjecture}\label{BS}
For any asymptotically exact family $\mathcal{K}$, the limit
\begin{equation*}
BS(\mathcal{K}) := \lim_{i\to\infty} \frac{\log h_{K_i} R_{K_i}}{g_{K_i}}
\end{equation*}
exists and is equal to
\begin{equation}\label{BS1}
BS(\mathcal{K})=1 + \sum_q \phi_q \log\frac{q}{q-1} -\phi_{\mathbb{R}} \log 2 - \phi_{\mathbb{C}}\log 2\pi.
\end{equation}
Using the class number formula, the above statement is equivalent to the existence of the limit
\begin{equation*}
\rho(\mathcal{K}) := \lim_{i\to\infty} \frac{\log \rho_{K_i}}{g_{K_i}}
\end{equation*}
and 
\begin{equation}\label{BS2}
\rho(\mathcal{K}) =  \sum_q \phi_q \log\frac{q}{q-1}.
\end{equation}
\end{conjecture}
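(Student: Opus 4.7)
The plan is to establish Conjecture \ref{BS} in the two indicated regimes by following the Tsfasman-Vl\u{a}du\c{t}-Zykin framework, substituting V.~K. Murty's zero-free region for $\zeta_K(s)$ (valid when $K$ has solvable Galois closure) for the Stark zero-free region used in the almost normal case. Since \eqref{BS1} and \eqref{BS2} are equivalent via the class number formula \eqref{class-number formula}, I would work throughout with $\log \rho_{K_i}$, aiming to prove $\log\rho_{K_i}/g_{K_i} \to \sum_q \phi_q \log(q/(q-1))$.

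The main analytic tool is Weil's explicit formula applied to $\zeta_{K_i}(s)$ with a compactly supported test function $F$ concentrated in $[-\log x, \log x]$. This furnishes, schematically, an identity of the form
\begin{equation*}
\log \rho_{K_i} \;=\; \sum_{q\leq x} N_q(K_i)\log\frac{q}{q-1} \;+\; A(K_i,F) \;-\; \sum_{\rho} \widehat F(\rho) \;+\; E(x,K_i),
\end{equation*}
where $A(K_i,F)$ is the archimedean contribution (linear in $r_1(K_i), r_2(K_i), g_{K_i}$), the zero sum runs over non-trivial zeros of $\zeta_{K_i}$, and $E(x,K_i) \to 0$ as $x \to \infty$ uniformly in $i$. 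Dividing by $g_{K_i}$ and invoking asymptotic exactness, the prime-power sum tends to $\sum_{q\leq x} \phi_q \log(q/(q-1))$ and the archimedean piece produces the companion terms on the right of \eqref{BS1}. Choosing $F$ so that $\widehat F \geq 0$ on the critical line (the Tsfasman-Vl\u{a}du\c{t} class of test functions) discards the zero sum with a favourable sign, yielding the upper bound $\limsup \log\rho_{K_i}/g_{K_i} \leq \sum_q \phi_q \log(q/(q-1))$.

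The hard part is the matching lower bound, for which one must show $\sum_\rho \widehat F(\rho) = o(g_{K_i})$. This is precisely where the solvability hypothesis enters: Murty's theorem yields an effective zero-free region for $\zeta_K(s)$ near $s=1$ whenever $K$ has solvable Galois closure, with at most one possible real exceptional zero $\beta$ obeying $1-\beta \gg 1/g_K$ (in the spirit of \eqref{louboutin-zero}). I would split off the exceptional zero, control its contribution via $|\log(1-\beta)| = O(\log g_{K_i}) = o(g_{K_i})$, and estimate the remaining zeros by integrating $\widehat F$ against the zero density allowed outside the zero-free region. In the asymptotically good towers case, the tower condition $K_i \subsetneq K_{i+1}$ forces $g_{K_i}\to\infty$, so a truncation $x = x(i)\to\infty$ can be selected slowly enough to annihilate $E(x(i),K_i)/g_{K_i}$. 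In the asymptotically bad case, $\phi_q, \phi_{\mathbb{R}}, \phi_{\mathbb{C}}$ all vanish, so \eqref{BS2} reduces to $\log\rho_{K_i}/g_{K_i} \to 0$, and Murty's zero-free region alone essentially suffices, generalizing the Zykin route.

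The principal obstacle is uniformity in Murty's constants across the family $\mathcal{K}$: those constants depend on invariants of the Galois closure such as the length of a subnormal series, and one must verify that for families with solvable closure these invariants grow slowly enough that their effect on $\sum_\rho \widehat F(\rho)/g_{K_i}$ remains $o(1)$. Establishing this uniformity — rather than merely the existence of a zero-free region for each individual $K_i$ — is where the real technical work lies, and it is what extends the theorems of Tsfasman-Vl\u{a}du\c{t} and Zykin from the almost normal setting to the solvable-closure setting.
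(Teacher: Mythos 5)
Your overall architecture (upper bound from the Tsfasman--Vl\u{a}du\c{t} basic inequality, lower bound from control near $s=1$ supplied by the solvability hypothesis) matches the paper's, but there is a genuine gap at exactly the point where the whole difficulty lives: the possible Siegel zero. You assert that Murty's theorem gives ``at most one possible real exceptional zero $\beta$ obeying $1-\beta \gg 1/g_K$'' and then control its contribution by $|\log(1-\beta)| = O(\log g_{K_i})$. Neither claim is available. Louboutin's result \eqref{louboutin-zero} says there is at most one real zero \emph{in} the region $1 - 1/(c_0 g_K) \le \beta < 1$; it gives no lower bound on $1-\beta$ for that zero, and unconditionally $\beta$ may be arbitrarily close to $1$, which is precisely what could force $\rho_{K_i}$ (via $\rho_K > c'(1-\beta)$, Lemma \ref{Stklem}) to be too small and destroy the lower bound. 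What Murty's theorem (Theorem \ref{Km}) actually provides is not a zero-free region but a \emph{descent}: any real zero of $\zeta_K$ in the region \eqref{region} is a zero of $\zeta_N$ for some quadratic subfield $N \subseteq K$. This descent is the crux of both arguments in the paper, and it is absent from your proposal.

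Concretely, the paper exploits the descent in two different ways. For asymptotically good towers, Lemma \ref{quad-fields} shows that only finitely many quadratic fields occur as subfields of members of the family (since $g_k \le 2/\epsilon$ for any quadratic subfield $k$), so all but finitely many $\zeta_{K_i}$ are zero-free in the region \eqref{region}; one then runs the expansion $\zeta_K(s) = \rho_K F_K(s)/(s-1)$ at $s = 1+\theta_{K_i}$ with $\theta_{K_i} = g_{K_i}^{-(C_2+1)\log g_{K_i}}$, using the Lagarias--Odlyzko estimate for $G_K(x)-x$ (Lemma \ref{mainlemma}) rather than a direct zero sum. For asymptotically bad families, when a zero in the region does occur one passes to the quadratic subfield $k_i$ and invokes the classical (ineffective) Siegel theorem for quadratic fields together with Louboutin's upper bound (Theorem \ref{upbound}); this is the only known unconditional way around the Siegel zero. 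By contrast, the obstacle you single out as principal---uniformity of Murty's constants across the family---is minor: those constants depend only on the degree $n$ through $e(n)$ and $\delta(n)$, which are polynomially (indeed, $n^{e(n)}\delta(n) \le g_K^{O(\log g_K)}$) bounded in terms of $g_K$ for an asymptotically good family. Without the descent to quadratic subfields, your lower-bound argument does not close.
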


In the rest of the paper, we refer to the above conjecture as the GBS conjecture. Note that the GBS conjecture for asymptotically bad families is equivalent to the classical Brauer-Siegel conjecture. In \cite{TV}, Tsfasman-Vl\u{a}du\c{t} proved GBS for any asymptotically exact family $\mathcal{K}$ under the assumption of GRH. Unconditionally, they proved it for asymptotically good towers of almost normal number fields. Later in 2005, Zykin \cite{Zyk} showed GBS for asymptotically bad families of almost normal number fields.

\section{\bf Asymptotically exact families with solvable Galois closure}
\medskip

Let $K/\mathbb{Q}$ be a number field and $L\supseteq K \supseteq \mathbb{Q}$ be the normal closure of $K$ over $\mathbb{Q}$. We say that $K$ has solvable Galois closure if the Galois group $Gal(L/\mathbb{Q})$ is solvable. Recall that a group $G$ is said to be solvable if there exists subgroups $\{1\} =G_0\subseteq G_1\subseteq\cdots\subseteq G_m=G$ with $G_{i-1}$ a normal subroup of $G_i$ and $G_i/G_{i-1}$ abelian for $1\leq i\leq m$. We show the following.

\begin{theorem}\label{gbs-good-tower}
Let $\mathcal{K} = \{K_i\}$ be an asymptotically good tower of number fields, where each $K_i$ has solvable Galois closure over $\mathbb{Q}$. Then GBS holds for $\mathcal{K}$.
\end{theorem}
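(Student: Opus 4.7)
The plan is to adapt the strategy of Tsfasman-Vl\u{a}du\c{t} \cite{TV} for asymptotically good towers of almost normal fields to the setting of solvable Galois closures, the crucial substitution being V.K. Murty's zero-free region \cite{Km} in place of Stark's. By the class number formula \eqref{class-number formula}, the asserted identity \eqref{BS1} for $\mathcal{K}$ is equivalent to \eqref{BS2}, and the direction
\begin{equation*}
\limsup_{i\to\infty} \frac{\log \rho_{K_i}}{g_{K_i}} \leq \sum_q \phi_q \log\frac{q}{q-1}
\end{equation*}
is proved unconditionally in \cite{TV} for every asymptotically exact family by routine bounds on $\zeta_K(1+\varepsilon)$. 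Thus the task reduces to the matching lower bound for our tower.

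For the lower bound I would integrate $\log\zeta_{K_i}(s)$ against a suitable non-negative test function $F$ concentrated near $s=1$; the resulting explicit formula takes the schematic shape
\begin{equation*}
\log \rho_{K_i} = \sum_q N_q(K_i)\,\alpha_q(F) + r_1(K_i)\alpha_{\mathbb{R}}(F) + r_2(K_i)\alpha_{\mathbb{C}}(F) + g_{K_i}\alpha_\infty(F) - \sum_\rho F(\rho) + o(g_{K_i}),
\end{equation*}
with $\rho$ ranging over the non-trivial zeros of $\zeta_{K_i}(s)$ and the coefficients depending only on $F$. Dividing by $g_{K_i}$ and letting $i \to \infty$ along an appropriate limiting family of $F$'s, asymptotic exactness makes the arithmetic and archimedean pieces converge to the right-hand side of \eqref{BS2}, so the problem reduces to controlling the zero contribution $\frac{1}{g_{K_i}}\sum_\rho F(\rho)$ from above.

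This is exactly where Murty's theorem intervenes: since each $K_i$ has solvable Galois closure, \cite{Km} provides a Stark-type zero-free region $\Re(s)\geq 1 - c/\log d_{K_i}$ for $\zeta_{K_i}(s)$ with at most one real exceptional (Siegel) zero $\beta_i$, which if present arises from a quadratic Hecke character attached to a subfield of the Galois closure of $K_i$. The non-exceptional zeros contribute $o(g_{K_i})$ to the zero sum by standard density estimates in horizontal strips. The main obstacle is therefore the possible exceptional zero $\beta_i$, which could in principle make $-\log(1-\beta_i)$ grow faster than $g_{K_i}$ and destroy the lower bound.

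To dispose of this last obstruction I would combine Louboutin's bound \eqref{louboutin-zero} with the good tower hypothesis. Louboutin's result confines any exceptional zero to the narrow window $[1-1/(c_0 g_{K_i}),1)$, and the boundedness of the root discriminants $d_{K_i}^{1/n_{K_i}}$ together with the tower structure $K_i\subsetneq K_{i+1}$ forces the quadratic Hecke character giving rise to $\beta_i$ to stabilize along $\mathcal{K}$, so that its contribution is effectively controlled by the classical (and effective, in the quadratic case) Brauer-Siegel bound applied to a fixed quadratic field. This yields $-\log(1-\beta_i) = o(g_{K_i})$ and hence $\liminf_{i\to\infty} \log\rho_{K_i}/g_{K_i} \geq \sum_q \phi_q \log\frac{q}{q-1}$, completing the proof.
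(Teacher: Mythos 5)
Your overall architecture agrees with the paper's in two respects: the reduction to the lower bound via the unconditional upper estimate of \cite{TV}, and the disposal of real zeros near $s=1$ by combining Murty's theorem with the finiteness of quadratic subfields in an asymptotically good family (this is Lemma \ref{quad-fields}; your ``stabilization of the quadratic Hecke character'' is the same observation, and it does give that only finitely many $\zeta_{K_i}$ can have a real zero in the region \eqref{region}, so that contribution is in fact $O(1)$). Those parts are sound, modulo the minor point that Murty's region has width $c/(n^{e(n)}\delta(n)\log d_K)$ rather than $c/\log d_K$, which is harmless here.

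The genuine gap is your treatment of the non-exceptional zeros. You claim that, after inserting a test function $F$ concentrated near $s=1$ into the explicit formula, ``the non-exceptional zeros contribute $o(g_{K_i})$ to the zero sum by standard density estimates in horizontal strips.'' Unconditionally this does not hold as stated: the number of non-trivial zeros of $\zeta_{K_i}$ with $|\Im\rho|\le 1$ is already of order $g_{K_i}$, and without GRH these zeros may sit anywhere in $0<\Re s<1$, so for a test function large enough near $s=1$ to recover $\log\rho_{K_i}$ the zero sum is a priori $O(g_{K_i})$, not $o(g_{K_i})$. The sign is also against you: for the \emph{lower} bound on $\log\rho_{K_i}$ the zeros enter negatively, so positivity of $\Re F$ on the critical strip (the device behind the unconditional basic inequality of \cite{TV}) only yields the upper bound, and Murty's theorem \cite{Km} says nothing about complex zeros. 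This is exactly the difficulty the paper's proof is engineered around: it avoids the explicit formula and instead uses the unconditional Lagarias--Odlyzko estimate for $G_K(x)-x$, which is effective only for $\log x\gg n\,g_K^2$, and compensates by taking $\theta_K=g_K^{-(C_2+1)\log g_K}$ super-exponentially small (Lemma \ref{mainlemma}); the saving grace is that $\log\theta_K\asymp(\log g_K)^2=o(g_K)$ still holds, while the tower hypothesis supplies the monotonicity $\phi_p\le N_p(K_i)/g_{K_i}$ needed to pass to the limit in the prime sum. Your sketch contains no analogue of this quantitative step, and without one the lower bound does not follow.
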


\begin{theorem}\label{gbs-bad-family}
Let $\mathcal{K} = \{K_i\}$ be an asymptotically bad family of number fields, where each $K_i$ has solvable Galois closure over $\mathbb{Q}$. Then GBS holds for $\mathcal{K}$.
\end{theorem}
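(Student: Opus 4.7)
Since $\mathcal K$ is asymptotically bad, the invariants $\phi_q$, $\phi_{\mathbb R}$, $\phi_{\mathbb C}$ all vanish, and Conjecture~\ref{BS} reduces to the classical Brauer--Siegel statement $\log\rho_{K_i}/g_{K_i}\to 0$. The plan is to follow the strategy used by Zykin \cite{Zyk} for asymptotically bad families of almost normal fields, substituting V.~K.~Murty's zero-free region for fields with solvable Galois closure \cite{Km} in place of the Stark-type input Zykin uses in the almost normal setting. The asymptotic badness hypothesis is equivalent to $n_{K_i}/g_{K_i}\to 0$, which I use freely.

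The upper bound $\limsup \log\rho_{K_i}/g_{K_i}\le 0$ does not require the Galois hypothesis. Louboutin's inequality $\rho_K \le \bigl(e\log d_K/(2(n_K-1))\bigr)^{n_K-1}$ gives, with $t_i := n_{K_i}/\log d_{K_i}\to 0$,
\[ \frac{\log\rho_{K_i}}{g_{K_i}} \le 2t_i\bigl(1-\log 2-\log t_i\bigr)+o(1), \]
and the right hand side tends to $0$ since $t\log(1/t)\to 0$ as $t\to 0^+$. For the matching lower bound I apply the Hadamard product expansion of $(s-1)\zeta_{K_i}(s)$, evaluated at $s=1+\delta_i$ for a carefully chosen $\delta_i\to 0^+$, to obtain the Stark-type estimate
\[ \log\rho_{K_i}\ge \log(1-\beta_i)+O(n_{K_i})+o(g_{K_i}), \]
where $\beta_i$ denotes a hypothetical largest real zero of $\zeta_{K_i}$ in $[1-c/\log d_{K_i},1)$, with the first term omitted when no such zero exists. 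Both error terms are $o(g_{K_i})$ by asymptotic badness, so the problem reduces to showing $\log(1-\beta_i)=o(g_{K_i})$.

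This is exactly where the solvable Galois closure hypothesis enters through V.~K.~Murty's theorem \cite{Km}: when $K$ has solvable Galois closure $L/\mathbb Q$, any Siegel-type real zero of $\zeta_K(s)$ is also a zero of a Hecke $L$-function $L(s,\chi)$ attached to a real quadratic character $\chi$ that factors through $Gal(L/\mathbb Q)^{\mathrm{ab}}$, and the conductor of $\chi$ is controlled by $d_K$. Letting $F\subset L$ be the quadratic field cut out by $\chi$, so that $\zeta_F(s)=\zeta(s)L(s,\chi)$, Siegel's classical ineffective theorem yields $1-\beta_i\ge C(\varepsilon)d_F^{-\varepsilon}$ for every $\varepsilon>0$. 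The hard step is then the uniform estimate $\log d_F \ll g_{K_i}$ across the entire family, which is the quantitative content of Murty's theorem that genuinely uses solvability: one peels off abelian layers in the solvable tower $L/\mathbb Q$ and at each layer invokes the conductor-discriminant formula to bound the conductor of $\chi$, and hence $d_F$, polynomially in $d_{K_i}$. Combined with Siegel's bound this gives $-\log(1-\beta_i)\le \varepsilon g_{K_i}+O_\varepsilon(1)$, and letting $\varepsilon\to 0$ completes the lower bound and hence the proof.
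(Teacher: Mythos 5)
Your proposal is correct and follows essentially the same route as the paper: Stark's lower bound $\rho_K\gg 1-\beta$ reduces the problem to controlling the exceptional zero, Murty's theorem descends that zero to a quadratic field whose discriminant is polynomially bounded by $d_{K_i}$, and the ineffective Siegel input for quadratic fields (which the paper packages instead as the classical Brauer--Siegel theorem for $k_i$ combined with Louboutin's upper bound $\rho_{k_i}\le(1-\beta_i)(eg_{k_i}/4)^2$) finishes, with all $O(n_{K_i})$ errors absorbed by asymptotic badness. The only imprecision is that Murty's theorem applies in the narrower region $1-c/(n^{e(n)}\delta(n)\log d_K)\le\beta<1$ rather than your $1-c/\log d_{K_i}\le\beta<1$, but zeros in the intermediate range already satisfy $-\log(1-\beta)\ll e(n)\log n+\log\log d_K=o(g_{K_i})$, so your argument goes through.
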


We give a simple example to illustrate Theorem \ref{gbs-bad-family}.
\begin{example}
Let $K_n := \mathbb{Q}(2^{1/3}, 3^{1/3}, \cdots , p_n^{1/3})$, where $p_n$ is the n-th prime number. Then, $\{K_n\}$ forms an asymptotically bad family of number fields where each $K_n$ has solvable Galois closure. By Theorem \ref{gbs-bad-family}, GBS holds for $\{K_n\}$, i.e.,
\begin{equation*}
   \lim_{n\to \infty} \frac{\log \rho_{K_n}}{g_{K_n}} =0.
\end{equation*}
\end{example}

For an asymptotically bad family $\mathcal{K}$, GBS implies that $\rho(\mathcal{K})=0$. One of the natural questions is to determine the rate at which this limit converges to $0$. In this context, we show the following conditional result.

\begin{theorem}\label{GRH-effective}
Under the assumption of GRH, for an asymptotically bad family $\mathcal{K}= \{K_i\}$, we have 
\begin{equation}\label{grh-effective-equation}
\frac{\log \rho_{K_i}}{g_{K_i}} = O\left(\frac{\log g_{K_i}/n_{K_i}}{g_{K_i}/n_{K_i}}\right),
\end{equation}
where the implied constant only depends on $\mathcal{K}$.
\end{theorem}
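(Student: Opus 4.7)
The plan is to estimate $\log \rho_{K_i}$ via the integral representation
\begin{equation*}
\log \rho_K \;=\; \log\bigl[(\sigma-1)\zeta_K(\sigma)\bigr] \;-\; \int_1^\sigma\left[\frac{1}{s-1}+\frac{\zeta_K'(s)}{\zeta_K(s)}\right]\,ds,
\end{equation*}
valid for any real $\sigma>1$, which follows by integrating $\frac{d}{ds}\log[(s-1)\zeta_K(s)]$ and using the regularity of this logarithm at $s=1$, where $(s-1)\zeta_K(s)\to\rho_K$. I will choose $\sigma$ close to $1$, with distance depending on $K_i$, and bound the two pieces separately.

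For the boundary term, I would invoke the elementary Euler-product bound $\zeta_K(\sigma)\leq\zeta(\sigma)^{n_K}$ together with $\log\zeta_K(\sigma)\geq 0$ to obtain
\begin{equation*}
\bigl|\log\bigl[(\sigma-1)\zeta_K(\sigma)\bigr]\bigr| \;\leq\; n_K\,|\log(\sigma-1)|+O(n_K),
\end{equation*}
using $\log\zeta(\sigma)=\log\tfrac{1}{\sigma-1}+O(1)$ as $\sigma\downarrow 1$.

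The main work lies in bounding the integral under GRH. Starting from the Hadamard factorization of the completed zeta function $\xi_K(s)=s(s-1)d_K^{s/2}\Gamma_{\mathbb R}(s)^{r_1}\Gamma_{\mathbb C}(s)^{r_2}\zeta_K(s)$, eliminating the Hadamard constant via the functional equation $\xi_K(s)=\xi_K(1-s)$, and comparing with the logarithmic derivatives of the gamma factors, one derives
\begin{equation*}
\frac{1}{s-1}+\frac{\zeta_K'(s)}{\zeta_K(s)} \;=\; -\tfrac{1}{2}\log d_K + G_K(s) + \sum_\rho\frac{1}{s-\rho},
\end{equation*}
where the gamma-factor contribution $G_K(s)$ satisfies $|G_K(s)|=O(n_K)$ uniformly for $\Re(s)\in[1,2]$. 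Under GRH every nontrivial $\rho=1/2+i\gamma$, so for real $\sigma$ the zero-sum equals the manifestly non-negative $\sum_\gamma\frac{\sigma-1/2}{(\sigma-1/2)^2+\gamma^2}$. Abel summation against the Weyl-type count $N_K(T)=\frac{T}{\pi}\log\bigl(d_K(T/2\pi e)^{n_K}\bigr)+O(\log d_K+n_K\log T)$ evaluates this sum as $\tfrac{1}{2}\log d_K+O(\log d_K)$. Crucially, the dominant $\pm\tfrac{1}{2}\log d_K$ contributions cancel, leaving the integrand $O(\log d_K)=O(g_K)$ uniformly on $[1,2]$.

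Finally, I would set $\sigma-1=c\,n_{K_i}/g_{K_i}$ for a small absolute constant $c>0$; note that $\sigma>1$ and $\sigma\to 1$ along the family, since $g_{K_i}/n_{K_i}\to\infty$ for asymptotically bad families. The boundary term is then $O\bigl(n_{K_i}\log(g_{K_i}/n_{K_i})\bigr)$ while the integral is $O\bigl((\sigma-1)g_{K_i}\bigr)=O(n_{K_i})$, giving $|\log\rho_{K_i}|=O\bigl(n_{K_i}\log(g_{K_i}/n_{K_i})\bigr)$. Dividing by $g_{K_i}$ yields \eqref{grh-effective-equation}. The main obstacle I foresee is the uniform bookkeeping of the zero-sum under GRH: one must verify that the Abel-summation error is genuinely controlled by $\log d_K$ and that the two $\tfrac{1}{2}\log d_K$ contributions (from the gamma-factor expansion and from the main term of $\sum_\rho 1/(\sigma-\rho)$) cancel cleanly in $K$, leaving an error of the right order in $n_K$ and $g_K$ to yield the stated rate after division by $g_{K_i}$.
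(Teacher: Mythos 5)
Your proposal is correct, and it reaches the stated bound by a genuinely different technical route, although the underlying decomposition is in fact the same as the paper's: your integral $\int_1^\sigma\bigl[\tfrac{1}{s-1}+\tfrac{\zeta_K'}{\zeta_K}(s)\bigr]\,ds$ is exactly $\log F_K(\sigma)$ in the paper's notation \eqref{zeta_k-around-1}, and your boundary term $\log[(\sigma-1)\zeta_K(\sigma)]$ is the paper's $\log\zeta_K(1+\theta_K)+\log\theta_K$. The difference is how the two pieces are controlled. For the GRH-dependent piece, the paper invokes the conditional Lagarias--Odlyzko estimate $|G_K(x)-x|\ll g_K x^{1/2}(\log x)^2$ through a Mellin-transform identity (Lemma \ref{grheffect}) to get $|\log F_K(1+\theta)|\ll\theta g_K$, whereas you use the Hadamard factorization plus the Weyl-type zero count to bound the integrand by $O(\log d_K)$ pointwise; both yield $O((\sigma-1)g_K)$. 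For the boundary piece, the paper integrates $-\zeta_K'/\zeta_K\le -n_K\zeta'/\zeta$ while you use $\zeta_K(\sigma)\le\zeta(\sigma)^{n_K}$ directly --- essentially the same elementary bound $n_K\log\tfrac{1}{\sigma-1}+O(n_K)$. Your choice $\sigma-1\asymp n_K/g_K$ and the paper's $\theta_K=\tfrac{\log(g_K/n_K)}{g_K/n_K}$ both balance the two contributions to give \eqref{grh-effective-equation}. One simplification worth noting: the "cancellation of the $\pm\tfrac12\log d_K$ terms" that you flag as the main obstacle is not actually needed, and as you phrase it ($\tfrac12\log d_K+O(\log d_K)$) it carries no content anyway. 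All the argument requires is the one-sided estimate $0\le\sum_\rho(\sigma-\rho)^{-1}\ll\log d_K$ uniformly for $\sigma\in[1,2]$, which under GRH follows at once by comparing term by term with $\sum_\rho\Re\,(2-\rho)^{-1}=\Re\,\tfrac{\xi_K'}{\xi_K}(2)\ll\log d_K$, with no Abel summation against $N_K(T)$ required. Your approach has the advantage of being self-contained from the explicit formula rather than citing effective Chebotarev; the paper's has the advantage of recycling the machinery already set up for Lemma \ref{mainlemma}.
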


Note that for an asymptotically bad family, $g_K/n_K$ tends to infinity and hence the right hand side in \eqref{grh-effective-equation} tends to $0$.

\section{\bf Preliminaries}
\medskip

In this section, we state and prove some results which will be useful in proofs of the main theorems. A crucial role in our proof is played by a result of V. K. Murty in \cite{Km}, a weaker version of which is as follows.
\begin{theorem}\label{Km}(Murty)
Suppose $K/\mathbb{Q}$ is an extension of degree $n$ whose Galois closure is solvable. Let
\begin{align*}
e(n) & := \max_{p^\alpha || n} \alpha, \\
\delta(n) & := (e(n) + 1)^2 \, \, 3^{1/3} \, \, 12^{e(n) -1}.
\end{align*}
There exists an absolute constant $c>0$, such that if $\zeta_K$ has a real zero in the region
\begin{equation}\label{region}
1 - \frac{c}{n^{e(n)} \delta(n) \log d_K} \leq \beta < 1,
\end{equation}
then there is a quadratic field $N\subseteq K$, such that $\zeta_N(\beta) = 0$.
\end{theorem}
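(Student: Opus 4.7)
My approach would exploit the solvability of the Galois closure $L/\mathbb{Q}$ of $K$ to reduce the question about zeros of $\zeta_K$ to one about Hecke $L$-functions over a tower of abelian extensions, where classical Stark-type techniques apply. Using that $G := \mathrm{Gal}(L/\mathbb{Q})$ is solvable, I would build a chief series of normal subgroups $\{1\} = G_0 \subset G_1 \subset \cdots \subset G_m = G$ with each quotient $G_i/G_{i-1}$ cyclic of prime-power order, and take the corresponding tower $\mathbb{Q} = L_0 \subset L_1 \subset \cdots \subset L_m = L$ of abelian extensions. By the Aramata--Brauer theorem, $\zeta_L/\zeta_K$ is entire, so any putative real zero $\beta$ of $\zeta_K$ close to $1$ is automatically a zero of $\zeta_L$.

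Next, I would locate the lowest rung at which the zero appears: let $i$ be minimal with $\zeta_{L_i}(\beta) = 0$. By class field theory, $\zeta_{L_i}(s)/\zeta_{L_{i-1}}(s) = \prod_\chi L(s, \chi)$, where $\chi$ runs over nontrivial characters of $\mathrm{Gal}(L_i/L_{i-1})$ viewed as Hecke characters of $L_{i-1}$, so some such $L(\beta, \chi) = 0$. I then apply a Stark/Landau-type argument---using the nonnegativity of the Dirichlet coefficients of the zeta function of the field cut out by $\chi$---to force $\chi$ to be a real (hence quadratic) character whenever $\beta$ lies in the narrow region prescribed by \eqref{region}. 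This produces a quadratic extension $M/L_{i-1}$ inside $L$ with $\zeta_M(\beta) = 0$.

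Finally, I would descend back to $\mathbb{Q}$. Using Brauer induction together with the explicit structure of the solvable group $G$, one should show that the quadratic character corresponding to $M$ is induced from a quadratic character of $G$ whose fixed field $N$ is contained in $K$ itself, not merely in $L$; the fact that $\beta$ was visible already in $\zeta_K$ (and not only in $\zeta_L$) is what drives this descent. Then $\zeta_N(\beta) = 0$, which is the desired conclusion.

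The main obstacle is the careful accounting of constants. At each step of the tower one invokes a zero-free region for Hecke $L$-functions whose width depends on the Hecke conductor, which the conductor-discriminant formula converts into a bound involving $d_{L_{i-1}}$. Controlling how $d_{L_{i-1}}$ inflates relative to $d_K$ across the $m$ steps---where $[L:\mathbb{Q}]$ can be as large as $n^{e(n)}$---produces the factor $n^{e(n)}$ appearing in \eqref{region}, while the quantity $(e(n)+1)^2 \cdot 12^{e(n)-1}$ inside $\delta(n)$ records the accumulated losses from each cyclic prime-power step. Arranging simultaneously that the descent lands in a quadratic subfield of $K$ itself, rather than merely of $L$, while preserving these quantitative bounds, is the delicate balance that separates this theorem from the earlier Stark-type results for almost normal fields.
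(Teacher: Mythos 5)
This statement is not proved in the paper at all: it is quoted verbatim (in weakened form) from V.~K.~Murty's article \cite{Km}, so there is no internal proof to compare against. Judged on its own terms, your sketch has the right general flavour --- exploit solvability to factor zeta functions along a tower, and use a Landau/Stark positivity argument to force a real zero near $1$ to come from a quadratic character --- but it has a genuine gap exactly at the step that makes Murty's theorem hard, namely the ``descent''.

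Concretely: your tower argument, if it works, produces a quadratic \emph{relative} extension $M/L_{i-1}$ inside $L$ with $\zeta_M(\beta)=0$. Unless $i=1$, the field $M$ is not quadratic over $\mathbb{Q}$, and nothing in your construction places $M$ (or any field built from it) inside $K$; the chief series of $G$ is chosen with no reference to the subgroup $H=\mathrm{Gal}(L/K)$. Your final paragraph asserts that ``Brauer induction together with the explicit structure of the solvable group'' shows the relevant character is ``induced from a quadratic character of $G$ whose fixed field $N$ is contained in $K$'', but this is the theorem, not a proof of it, and the mechanism as stated is backwards (induction goes from subgroups up to $G$, and a character of a subquotient $\mathrm{Gal}(L_i/L_{i-1})$ is not in general the restriction of an abelian character of $G$). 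The clean way the containment $N\subseteq K$ arises is via the factorization $\zeta_K(s)=\prod_\chi L(s,\chi)^{\langle \chi,\,\mathrm{Ind}_H^G 1\rangle}$ into Artin $L$-functions: if one can show that the vanishing at $\beta$ is forced onto a \emph{one-dimensional order-two} constituent $\chi$ of $\mathrm{Ind}_H^G 1$, then Frobenius reciprocity gives $\chi|_H=1$, hence $H\leq\ker\chi$ and $N=L^{\ker\chi}\subseteq K$. Reducing to such a $\chi$ --- rather than to an arbitrary quadratic character of some intermediate layer --- is where the solvable group theory (minimal normal subgroups, induction on $[K:\mathbb{Q}]$) and the quantitative bookkeeping producing $n^{e(n)}\delta(n)$ actually enter, and your sketch does not supply it.
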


We prove the following important lemma which connects the generalized Brauer-Siegel conjecture to zero-free regions for Dedekind zeta-functions. The proof of Lemma \ref{mainlemma} is inspired by \cite{TV} and uses their notation. For a number field $K$, write
\begin{equation}\label{zeta_k-around-1}
    \zeta_K(s) = \frac{\rho_K}{s-1}\,  F_K(s),
\end{equation}
where $F_K(s)$ is entire. Define
\begin{equation*}
Z_K(s) := \frac{d}{ds} \left( \frac{\log F_K(s)}{g_K} \right).
\end{equation*}

\begin{lemma}\label{mainlemma}
Let $K$ be a member of an asymptotically good family $\mathcal{K}$. Suppose the degree of $K$ is $n$ and $\zeta_K(s)$ has no zero in the region \eqref{region}. Then there exist absolute constants $C_1$, $C_2 $ and $C_3 >0$ dependent on $\mathcal{K}$, but independent of $K$, satisfying
\begin{equation*}
|Z_K(1+\theta)| \leq C_1\, g_K^{C_2 \log g_K},
\end{equation*}
for any $\theta\in (0,1)$ and any $g_K > C_3$.
\end{lemma}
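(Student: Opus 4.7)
The plan is to use the Hadamard factorization of the completed Dedekind zeta function $\Lambda_K(s) = s(s-1)\gamma_K(s)\zeta_K(s)$ to express $g_K Z_K(1+\theta) = F_K'(1+\theta)/F_K(1+\theta)$ as a sum over non-trivial zeros of $\zeta_K$ plus archimedean contributions, and then to invoke the hypothesized zero-free region together with crude estimates on $n^{e(n)}\delta(n)$ to control the dominant terms.

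Specifically, $F_K(s) = (s-1)\zeta_K(s)/\rho_K$ is entire of order one with $F_K(1) = 1$, so logarithmic differentiation of the Hadamard product of $\Lambda_K$ yields
\begin{equation*}
g_K Z_K(1+\theta) = b - \frac{1}{1+\theta} - \frac{\gamma_K'(1+\theta)}{\gamma_K(1+\theta)} + \sum_\rho \left(\frac{1}{1+\theta-\rho}+\frac{1}{\rho}\right),
\end{equation*}
where $\rho$ ranges over non-trivial zeros of $\zeta_K$ and $b$ is the Hadamard constant. The gamma-factor term is $(1/2)\log d_K + O(n_K) = O(g_K)$, contributing $O(1)$ to $Z_K(1+\theta)$. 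Pairing $b$ with the $1/\rho$ terms via the functional equation $\Lambda_K(s) = \Lambda_K(1-s)$ reduces matters to estimating $\Re\sum_\rho (1+\theta-\rho)^{-1}$. The standard density bound $\#\{\rho : |\gamma - t| \leq 1\} = O(\log(d_K(|t|+2))) = O(g_K \log(|t|+2))$ controls the contribution of zeros at distance at least $1/2$ from $s = 1+\theta$, giving $O(g_K \log g_K)$ overall and hence $O(\log g_K)$ after dividing by $g_K$.

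The remaining potentially dominant contribution comes from zeros very close to $1+\theta$. The hypothesis of the lemma, combined with a complementary bound for low-lying complex zeros (a standard unconditional zero-free region in a disk of radius comparable to $r$ around $1$, or a Deuring-Heilbronn-type spreading argument), forces $|1+\theta - \rho| \geq c' r$ for every non-trivial zero $\rho$, where $r = c/(n^{e(n)}\delta(n)\log d_K)$. Using $n_K = O(g_K)$ (valid for all number fields by Minkowski's bound) and $e(n) \leq \log_2 n$, one checks
\begin{equation*}
n^{e(n)}\delta(n) \leq n^{\log_2 n}(\log_2 n + 1)^2 \cdot 3^{1/3} \cdot 12^{\log_2 n} = O(g_K^{C \log g_K})
\end{equation*}
for some absolute constant $C$, so $1/r = O(g_K^{C'\log g_K})$. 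Combining the contributions yields $|Z_K(1+\theta)| \leq C_1 g_K^{C_2 \log g_K}$ for $g_K > C_3$. The main obstacle I anticipate is justifying the zero-free disk in this last step: Murty's theorem only excludes real zeros, so one must rule out complex zeros of $\zeta_K$ in a small disk around $1$ by a separate input, either an unconditional zero-free region result or an argument exploiting the structure of solvable Galois closures.
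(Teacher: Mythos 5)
Your route is genuinely different from the paper's. The paper never touches the Hadamard factorization: it writes $Z_K(s)/s$ as a Mellin transform of $G_K(x)-x$, where $G_K(x)=\sum_{q^m\le x}N_q(K)\log q$, invokes the unconditional Lagarias--Odlyzko effective Chebotarev estimate (whose error term carries an explicit $x^\beta/\beta$ from the possible exceptional zero), and splits the integral at the point $\log x\asymp g_K^{C\log g_K}$ beyond which the hypothesis $1-\beta\ge c/(n^{e(n)}\delta(n)\log d_K)\gg g_K^{-C\log g_K}$ makes the $x^\beta$ term negligible; the final bound $g_K^{C_2\log g_K}$ then comes from a trivial estimate of the initial segment of the integral. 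In your version the same quantity enters directly as the bound $1/(1+\theta-\beta)=O(g_K^{C\log g_K})$ for the exceptional zero's contribution to the sum over zeros. Both arguments rest on the same quantitative input (your estimate of $n^{e(n)}\delta(n)$ matches the paper's, and $n_K=O(g_K)$ via Minkowski is fine --- the paper's two-sided bound from asymptotic goodness is not needed for this step), so your approach is a legitimate and arguably more transparent alternative.

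However, the obstacle you flag at the end is a genuine gap as written, and you must close it rather than gesture at two possible fixes. Murty's theorem and the lemma's hypothesis constrain only real zeros, so your claim that $|1+\theta-\rho|\ge c'r$ for \emph{every} nontrivial zero is unjustified until you control complex zeros near $s=1$. The correct input is the classical Landau--Hecke lemma (Lemma 8.1 of the Lagarias--Odlyzko paper already cited here): $\zeta_K(s)$ has at most one zero in the box $1-(4\log d_K)^{-1}\le\sigma\le 1$, $|t|\le(4\log d_K)^{-1}$, and that zero, if it exists, is real and simple. Hence every non-real zero satisfies $|1+\theta-\rho|\ge\max(1-\beta,|\gamma|)\gg 1/\log d_K\gg 1/g_K$, so the $O(g_K)$ zeros within distance $1/2$ of $1+\theta$ contribute $O(g_K^2)$ to $g_KZ_K(1+\theta)$, i.e.\ $O(g_K)$ to $Z_K(1+\theta)$, far below the target; only the single possible exceptional real zero can produce the $g_K^{C_2\log g_K}$ term, and that is exactly what the hypothesis of no real zero in \eqref{region} controls. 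No Deuring--Heilbronn spreading and no appeal to solvability is needed at this point --- solvability enters only through Murty's theorem, which the lemma has already absorbed into its hypothesis. With that one citation supplied your proof is complete; without it, the bound on the sum over zeros does not follow.
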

\begin{proof}
Using Mellin transform of the Chebyshev step function, we have
\begin{equation}\label{L-O}
\frac{Z_K(s)}{s} = \frac{1}{g_K} \int_{1}^{\infty} (G_K(x) - x) \, x^{-s-1} \, dx - \frac{1}{s \, g_K},
\end{equation}
for $\Re(s) > 1$, where
\begin{equation*}
G_K(x) := \sum_{\substack{q,m \geq 1 \\ q^m \leq x}} N_q(K) \log q.
\end{equation*}
The unconditional Lagarias-Odlyzko \cite[Theorem 9.2]{Lag} estimate for $G_K(x)$ gives
\begin{equation*}
|G_K(x) - x| \leq C_4 \, x \exp \left( -C_5 \sqrt{\frac{\log x}{n}}\right) + \frac{x^\beta}{\beta}
\end{equation*}
for $\log x \geq C_6 \, n \, g_K^2$, where $C_4$, $C_5$, $C_6$ are positive absolute constants. Here, $\beta$ is the possible real exceptional zero of $\zeta_K(s)$. If such a zero does not exist, we set $\beta = 1/2$. By \eqref{louboutin-zero}, for $\zeta_K(s)$ with no zeroes in the region \eqref{region}, we have
\begin{equation*}
1 - \frac{c}{n^{e(n)} \delta(n) g_K} \geq \beta \geq 1 - \frac{1}{c_0\,g_K}.
\end{equation*}
For an asymptotically good family $\mathcal{K}$, note that we have $n_{K_i}/ g_{K_i}$ converges to a positive real number as $i\to\infty$. This is because if $n_{K_i}/ g_{K_i}\to 0$ as $i\to\infty$, then $\phi_{\mathbb{R}},\phi_{\mathbb{C}}$ and $\phi_{q}$ would be $0$ for all $q$, which contradicts that $\mathcal{K}$ is asymptotically good. Therefore, we can find positive constants $C_0$ and $C_{00}$ depending on $\mathcal{K}$ such that
\begin{equation*}
C_0 \, n_{K_i} \leq g_{K_i }\leq C_{00} \, n_{K_i},
\end{equation*}
for all $i$ large. Since $K$ is a member of $\mathcal{K}$, we have
\begin{equation*}
e(n) \leq \frac{\log n}{\log 2} \leq C_7 \, \log g_K
\end{equation*}
and
\begin{equation*}
\delta(n) \leq C_7 n^4 \leq C_8 g_K^4,
\end{equation*}
where $C_7, C_8$ are absolute positive constants.
Therefore, there exists $C_9, C_{10} > 0$, such that for 
\begin{equation*}
\log x \geq C_{10} \, g_K^{2 \, C_9 \, \log g_K },
\end{equation*}
we have
\begin{equation*}
\frac{x^{\beta}}{\beta} = o\left( x \exp \left(-C_5' \sqrt{\frac{\log x}{n}}\right)\right),
\end{equation*}
where $C_5' >0$. Let
\begin{equation*}
    g_K' := C_{10} \, g_K^{2 \, C_9 \, \log g_K }.
\end{equation*}
Setting $s = 1 + \theta$ in \eqref{L-O}, we have
\begin{equation}\label{integral}
    \left| \frac{Z(1+\theta)}{1+\theta} \right| \leq I_1 + I_2 + O(1),
\end{equation}
where
\begin{equation*}
   I_1 =  \frac{1}{g_K} \int_1^{g_K'} \left|G_K(x)-x\right| x^{-2-\theta} dx \hspace{5mm} \text{ and }\hspace{5mm}
   I_2 =  \frac{1}{g_K} \int_{g_K'}^{\infty} \left|G_K(x)-x\right| x^{-2-\theta} dx.
\end{equation*}
For $I_1$, we use the following bound on $G_K(x)$,
\begin{equation*}
G_K(x) = \sum_{\substack{q,m\geq 1\\ q^m\leq x}} N_q(K) \log q \leq n \sum_{\substack{q,m \geq 1\\ 
q^m \leq x}} \log q \ll g_K \, x \log x.
\end{equation*}
Therefore, for some constant $C_{11}>0$, we have
\begin{equation*}
|G_K(x) - x| \leq C_{11} \, g_K \, x \log x.
\end{equation*}
Thus the integral
\begin{equation*}
I_1 = \frac{1}{g_K} \int_1^{g_K'} |G_K(x)-x| x^{-2-\theta} dx \leq C_{11} \int_1^{g_K'} x^{-1-\theta} \log x \, dx \, \leq \, C_{11} \, g_K' \left(1 - \exp\left(-\theta g_K' \right)\right) \ll  \,{g_K'}^2.
\end{equation*}

We now show that the integral $I_2$ is bounded. By the Lagarias-Odlyzko estimate \eqref{L-O}, using the change of variables $x = y^{g_K \log y},$ we have
\begin{equation*}
I_2 \leq \frac{C_4}{g_K} \int_{g_K'}^{\infty} \exp \left(-C_5' \sqrt{\frac{\log x}{g_K}}\right) x^{-1-\theta} dx = 2 \, C_4 \int_{\exp \left( \sqrt{g_K'/g_K} \right)}^\infty \,\,
y^{-C_5' - 1 - \theta g_K \log y} \log y\, dy.
\end{equation*}
For large $g_K$ and any fixed $\epsilon>0$, we bound $\log y \leq y^{\epsilon}$ and get
\begin{equation}\label{bound}
 \leq 2C_4 \int_{\exp \left( \sqrt{g_K'/g_K} \right)}^\infty \,\, y^{-C_5' - 1+\epsilon - \theta g_K \log y} dy \leq 2C_4 \int_{\exp \left( \sqrt{g_K'/g_K} \right)}^\infty y^{-C_5' - 1+\epsilon - \theta \sqrt{g_K'}} dy.
\end{equation}
Evaluating the integral \eqref{bound}, we have
\begin{equation*}
\frac{2C_4}{-C_5' - 1+\epsilon - \theta \sqrt(g_K')} \exp \left( \sqrt{g_K'/g_K} \right)^{-C_5' +\epsilon - \theta \sqrt{g_K'}} \leq C_{12}.
\end{equation*}
for some absolute constant $C_{12}$. Thus, we have the lemma.
\end{proof}

In order to find a zero-free region for all the Dedekind zeta-functions attached to number fields in an asymptotically good family, we prove a crucial lemma below.
\begin{lemma}\label{quad-fields}
Let $\mathcal{K} = \{K_i\}$ be an asymptotically good family of number fields. Set
\begin{equation*}
Q(\mathcal{K}) := \{k; \, [k:\mathbb{Q}]=2 \text{ and } k \subseteq K_i \text{ for some } i \}.
\end{equation*}
Then, $Q(\mathcal{K})$ is a finite set.
\end{lemma}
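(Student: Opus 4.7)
The plan is to show that the discriminants $\{d_k : k \in Q(\mathcal{K})\}$ form a bounded set of positive integers; finiteness of $Q(\mathcal{K})$ then follows at once, since quadratic fields over $\mathbb{Q}$ are classified up to isomorphism by their discriminants.

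First, I would invoke the conductor-discriminant tower formula applied to $\mathbb{Q} \subseteq k \subseteq K_i$:
\begin{equation*}
d_{K_i} = N_{k/\mathbb{Q}}(\mathfrak{d}_{K_i/k}) \cdot d_k^{[K_i:k]},
\end{equation*}
which in particular yields the crude inequality $d_k^{[K_i:k]} \leq d_{K_i}$. Taking logarithms and using $[K_i:k] = n_{K_i}/2$ together with $\log d_{K_i} = 2 g_{K_i}$ gives
\begin{equation*}
\log d_k \leq \frac{4\, g_{K_i}}{n_{K_i}}.
\end{equation*}

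Second, I would show that $g_{K_i}/n_{K_i}$ is bounded uniformly in $i$ for all $i$ sufficiently large. This is the same observation used inside the proof of Lemma \ref{mainlemma}: since $\mathcal{K}$ is asymptotically exact,
\begin{equation*}
\frac{n_{K_i}}{g_{K_i}} = \frac{r_1(K_i)}{g_{K_i}} + 2\,\frac{r_2(K_i)}{g_{K_i}} \longrightarrow \phi_{\mathbb{R}} + 2\phi_{\mathbb{C}},
\end{equation*}
and the trivial bound $N_q(K_i) \leq n_{K_i}$ forces $\phi_q \leq \phi_{\mathbb{R}} + 2\phi_{\mathbb{C}}$ for every prime power $q$. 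If $\phi_{\mathbb{R}} + 2\phi_{\mathbb{C}}$ were zero, then every $\phi_q$ would vanish as well, contradicting the hypothesis that $\mathcal{K}$ is asymptotically good. Hence $\phi_{\mathbb{R}} + 2\phi_{\mathbb{C}} > 0$, so $g_{K_i}/n_{K_i}$ is bounded above by some constant $C = C(\mathcal{K})$ for all large $i$.

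Combining the two steps, every $k \in Q(\mathcal{K})$ arising as a quadratic subfield of some $K_i$ with $i$ large satisfies $d_k \leq \exp(4C)$, so only finitely many such $k$ are possible; the finitely many initial members of $\mathcal{K}$ together contribute only finitely many quadratic subfields, and $Q(\mathcal{K})$ is finite. I do not foresee any genuine obstacle here: the crux is simply the recognition that asymptotic goodness forces a uniform upper bound on the ``root-discriminant exponent'' $g_{K_i}/n_{K_i}$, which through the elementary tower inequality controls $d_k$ for every quadratic subfield appearing in the family.
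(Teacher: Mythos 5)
Your proof is correct and follows essentially the same route as the paper: the paper's one-line inequality $\tfrac{n_k}{g_k}\ge \tfrac{n_{K_i}}{g_{K_i}}$ is exactly the root-discriminant monotonicity you derive from the tower discriminant formula, and the positivity of $\lim n_{K_i}/g_{K_i}$ is argued there the same way (via asymptotic goodness). You merely make explicit two steps the paper leaves implicit, so there is nothing to change.
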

\begin{proof}
Since $\mathcal{K}$ is asymptotically good, we have
\begin{equation*}
\lim_{i\to\infty} \frac{n_{K_i}}{g_{K_i}} >0.
\end{equation*}
Thus, there exists a fixed $\epsilon > 0$, such that $n_{K_i}/g_{K_i} > \epsilon$ for all $i$. Since $k\subseteq K_i$, we have
\begin{equation*}
\frac{2}{g_k} = \frac{n_k}{g_k} \geq \frac{n_{K_i}}{g_{K_i}} > \epsilon.
\end{equation*}
Thus, $g_k \leq 2/ \epsilon$. Hence, $Q(\mathcal{K})$ is finite.
\end{proof}

A vital role in our proof is also played by the following result of Stark (see \cite[Lemma 4]{Stk}).
\begin{lemma}\label{Stklem}(Stark)
There exists an effectively computable constant $c'>0$ such that for any number field $K$, we have
\begin{equation*}
\rho_K > c'(1-\beta),
\end{equation*}
where $\beta$ is the possible exceptional zero of $\zeta_K(s)$. If such a zero does not exist, then we set $\beta = 1/2$.
\end{lemma}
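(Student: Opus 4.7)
My plan is to reduce the lemma to an analytic lower bound on an auxiliary entire function. Set $f(s) := (s-1)\zeta_K(s)$. Since $\zeta_K(s)$ has a simple pole at $s=1$ with residue $\rho_K$, the function $f$ is entire with $f(1)=\rho_K$, and the hypothesis $\zeta_K(\beta)=0$ gives $f(\beta)=0$. Applying the mean value theorem to $f$ on the real interval $[\beta,1]$ yields
\[
\rho_K \;=\; f(1)-f(\beta) \;=\; (1-\beta)\, f'(\eta)
\]
for some $\eta\in(\beta,1)$. The lemma is thus reduced to exhibiting an effectively computable constant $c'>0$ such that $f'(\eta)\geq c'$, independently of the field $K$.

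To produce such a lower bound, I would first exploit sign information on $[\beta,1]$. Because $\beta$ is the exceptional zero (the largest real zero below $1$), $\zeta_K$ has no further zeros on $(\beta,1)$; since $\zeta_K(s)\sim \rho_K/(s-1)\to -\infty$ as $s\to 1^-$, this forces $\zeta_K(s)<0$ strictly on $(\beta,1)$. Combined with $s-1<0$ there, this gives $f(s)>0$ on $(\beta,1)$, with $f$ rising from $0$ at $\beta$ to $\rho_K$ at $1$. To convert this positivity into a uniform lower bound on $f'$, I would use the Hadamard product representation of $\zeta_K$ together with the functional equation to obtain an upper bound on $|f|$ (and thence $|f'|$ via Cauchy's estimate) on a small disk around $s=1$ depending at worst polynomially on $d_K$. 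One then extracts a uniform lower bound at some intermediate point of $(\beta,1)$ by a minimum-modulus or three-lines style argument exploiting the nonnegativity.

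The chief obstacle is precisely the \emph{effectivity} and \emph{absoluteness} of the constant $c'$. A naive attempt to evaluate $f'(1) = \gamma_K$, the Euler--Kronecker constant of $K$, fails because $\gamma_K$ is not uniformly bounded across all number fields. Hence one cannot simply plug in a fixed point; one must instead use uniform analytic control of $\zeta_K$ in a neighborhood of $s=1$, obtained by combining the classical convexity bound for $\zeta_K$ in the critical strip with the functional equation, so that the resulting lower bound for $f'(\eta)$ carries no implicit dependence on $n_K$, $d_K$, or other field-specific invariants. Securing such field-independent control is the central technical point of the argument.
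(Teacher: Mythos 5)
The paper does not prove this lemma at all: it is quoted verbatim from Stark \cite[Lemma 4]{Stk}. Your attempt, however, contains a genuine gap, and in fact two. First, the mean value theorem step is circular. By definition the point $\eta$ produced by the MVT satisfies $f'(\eta)=\rho_K/(1-\beta)$, so the task ``exhibit $c'>0$ with $f'(\eta)\geq c'$'' is word-for-word the statement $\rho_K\geq c'(1-\beta)$ that you set out to prove; no reduction has occurred. To use the MVT non-circularly you would need a uniform lower bound for $f'$ at \emph{every} point of $(\beta,1)$, and that is false: $f'(\beta)=(\beta-1)\zeta_K'(\beta)$ can be arbitrarily small (e.g.\ when $\beta$ is close to a second real zero), and positivity of $f$ on $(\beta,1)$ only forces $f$ to climb from $0$ to $\rho_K$ somewhere, possibly very steeply on a short subinterval and nearly flat elsewhere. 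Second, the analytic machinery you invoke points in the wrong direction. Hadamard factorization, the functional equation, convexity and Cauchy's estimate all produce \emph{upper} bounds on $|f|$ and $|f'|$ near $s=1$ that grow with $d_K$; a minimum-modulus or three-lines argument then returns lower bounds that are inverse powers of those upper bounds, hence quantities decaying in $d_K$, never an absolute constant. You correctly identify field-independence as ``the central technical point,'' but the toolkit you propose cannot deliver it. You also leave untreated the convention case $\beta=1/2$ (no exceptional zero), where the claim $\rho_K>c'/2$ must still be proved and $f(1/2)\neq 0$ in general.

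The missing idea is a positivity input. Stark's argument uses the theta-integral representation of the completed zeta function,
\begin{equation*}
\xi_K(s)=\kappa\left(\frac{1}{s-1}-\frac{1}{s}\right)+\int_1^\infty \left(x^{s/2}+x^{(1-s)/2}\right)\omega(x)\,\frac{dx}{x},
\end{equation*}
where $\omega(x)\geq 0$ is built from the ideal-counting theta series and $\kappa=c_K\,d_K^{1/2}\rho_K$ with $c_K$ the archimedean normalization. Evaluating at $s=\beta$ with $\xi_K(\beta)=0$ gives $\kappa=\beta(1-\beta)\,I(\beta)$ with $I(\beta)>0$, and the contribution of the unit ideal alone bounds $I(\beta)$ below by a quantity comparable to $c_K\,d_K^{1/2}$, so the field-dependent normalization cancels and an absolute constant survives. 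It is precisely this nonnegativity of the theta coefficients (equivalently, of the Dirichlet coefficients of $-\zeta_K'/\zeta_K$) that converts sign information into a uniform lower bound; no combination of upper bounds on $|f|$ can substitute for it.
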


Moreover, a theorem of Louboutin \cite{Lou1} regarding an upper bound for residues of Dedekind zeta-functions is of significance and hence, is stated below.
\begin{theorem}\label{upbound}(Louboutin)
Let $K$ be a number field. If $\zeta_K(\beta) = 0$ for $1/2 \leq \beta <1$, then
\begin{equation}
\rho_K \leq (1-\beta) \left( \frac{e g_K}{2n_K} \right)^{n_K}.
\end{equation}
\end{theorem}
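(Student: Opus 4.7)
The plan exploits the removable singularity produced by the hypothesis: since $\zeta_K$ has only a simple pole at $s=1$ and vanishes at $\beta$, the function
\[
h(s) := \frac{(s-1)\,\zeta_K(s)}{s-\beta}
\]
is entire, with $h(1)=\rho_K/(1-\beta)$. The theorem is therefore equivalent to the bound $|h(1)|\le (eg_K/(2n_K))^{n_K}$, and I would prove this by a contour/maximum-modulus argument after passing to the completed zeta function so as to exploit the functional equation symmetrically.

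Set $\Lambda_K(s):=s(s-1)\,d_K^{s/2}\,\gamma_K(s)\,\zeta_K(s)$, entire of order $1$ with $\Lambda_K(s)=\Lambda_K(1-s)$. The hypothesis $\zeta_K(\beta)=0$ forces $\Lambda_K(\beta)=\Lambda_K(1-\beta)=0$, so
\[
H(s) := \frac{\Lambda_K(s)}{(s-\beta)(s-(1-\beta))}
\]
is entire and still $s\mapsto 1-s$ invariant, with $H(1)=d_K^{1/2}\gamma_K(1)\rho_K/[\beta(1-\beta)]$. Any upper bound on $|H(1)|$ yields a bound on $\rho_K/(1-\beta)$ after dividing out the explicit archimedean factor $\beta\cdot d_K^{1/2}\gamma_K(1)$. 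I would estimate $|H(1)|$ by shifting a contour into $\Re s>1$: moving to the vertical line $\Re s=1+A$ for a free parameter $A>0$ and closing with horizontal segments $\Im s=\pm T$ that contribute negligibly as $T\to\infty$ since $\gamma_K$ decays exponentially in $|\Im s|$ by Stirling, while using the symmetry $H(s)=H(1-s)$ to avoid estimating $\zeta_K$ directly in $\Re s<0$. On $\Re s=1+A$ I would apply the Euler-product bound $|\zeta_K(s)|\le\zeta(\Re s)^{n_K}$, valid because each rational prime has at most $n_K$ primes above it of norm at least $p$, together with Stirling for the gamma factors.

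The bound that drops out on $|H(1)|$ is essentially $d_K^{(1+A)/2}\,\zeta(1+A)^{n_K}$ up to polynomial factors; since $d_K^{A/2}=e^{Ag_K}$ and $\zeta(1+A)^{n_K}\sim A^{-n_K}$ as $A\to 0^+$, minimising $Ag_K-n_K\log A$ over $A>0$ yields the optimal choice $A=n_K/g_K$ and an extremum of the Stirling-type shape $(eg_K/n_K)^{n_K}$, which is of precisely the required form. The delicate point, and the main obstacle, is the constant tracking needed to refine this to $(eg_K/(2n_K))^{n_K}$: the extra factor of $2^{n_K}$ improvement must come from sharpening the na\"ive Euler bound (e.g.\ separating primes of residue degree $\ge 2$ from those that split completely) and exploiting the explicit form $\gamma_K(1)=(2\pi)^{-r_2}$ via a precise Stirling expansion on the shifted line. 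This optimisation step, rather than the structural contour argument, is to my mind the heart of the proof.
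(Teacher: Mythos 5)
This is a quoted result: the paper offers no proof of Theorem \ref{upbound}, citing it directly from Louboutin \cite{Lou1}, so there is no internal argument to compare yours against; I can only assess the proposal on its merits, and it has two genuine gaps. First, the structural argument is not the one that actually proves such bounds, and as set up it would not deliver the stated shape. Writing $H(1)=\frac{1}{2\pi i}\oint\frac{H(s)}{s-1}\,ds$ and pushing to $\Re s=1+A$ (using the symmetry for the left edge) introduces, on the shifted line, the factors $|s-1|^{-1}$ and $|s-\beta|^{-1}$, each of size $A^{-1}=g_K/n_K$ near the real axis, plus $n_K$-th-power constants from integrating the Gamma factors; after optimising you land around $(Cg_K/n_K)^{n_K+1}$ rather than $(eg_K/n_K)^{n_K}$. (There is also the edge case $\beta=1/2$, where $H$ need not be entire.) Louboutin's actual proof is not a contour shift but a positivity argument on the real axis: the Hecke integral representation gives $\Lambda_K(s)=\kappa+\frac{s(s-1)}{2}\int_1^\infty\bigl(t^{s/2-1}+t^{(1-s)/2-1}\bigr)\omega_K(t)\,dt$ with $\omega_K\ge 0$ and $\kappa=d_K^{1/2}\gamma_K(1)\rho_K$; the zero at $\beta$ pins $\kappa=\frac{\beta(1-\beta)}{2}m(\beta)$ where $m$ is the integral factor, monotonicity of $m$ on $[1/2,\infty)$ yields $\kappa\le\beta(1-\beta)\,\xi_K(x)$ for real $x>1$, and only then does one set $x=1+2n_K/\log d_K$ and invoke $\zeta_K(x)\le\zeta(x)^{n_K}$. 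That mechanism, not a maximum-modulus estimate, is what produces the clean constant, and it is the idea missing from your sketch.

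Second, the step you yourself flag as the heart of the proof --- recovering a factor $2^{n_K}$ by sharpening the Euler-product bound --- cannot succeed: $\zeta_K(1+A)$ genuinely attains $\zeta(1+A)^{n_K}(1+o(1))$ for fields in which all small primes split completely, so no uniform savings of $2^{n_K}$ is available there, and the Gamma-factor ratio at $1+A$ is essentially $1$. The resolution is that the factor of $2$ in Louboutin's bound $\rho_K\le(1-\beta)\bigl(\frac{e\log d_K}{2n_K}\bigr)^{n_K}$ is exactly the $2$ in $\log d_K=2g_K$; in the paper's normalisation the bound reads $(1-\beta)(eg_K/n_K)^{n_K}$, and the displayed form $(eg_K/(2n_K))^{n_K}$ appears to be a transcription slip that is a factor $2^{n_K}$ too strong. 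So the constant you are trying to reach is very likely not provable by any of these methods; your optimisation $A=n_K/g_K$ correctly produces $(eg_K/n_K)^{n_K}$, which is the true target. For the paper's application (Theorem \ref{gbs-bad-family}) only $\frac{1}{g_K}\log(\text{bound})\to 0$ is needed, so even your weaker contour-derived bound would suffice there, but as a proof of the theorem as stated the proposal does not close.
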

The following conditional bound is utilized in the proof of Theorem \ref{GRH-effective}.
\begin{lemma}\label{grheffect}
Under the assumption of GRH, for any number field $K$, we have
\begin{equation*}
\frac{|Z_K(1+\theta)|}{1+\theta} \ll 1
\end{equation*}
for $\theta\in (0,1)$.
\end{lemma}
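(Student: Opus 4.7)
The plan is to revisit the integral representation used in the proof of Lemma \ref{mainlemma} and to replace the unconditional Lagarias--Odlyzko bound by its much stronger GRH-conditional counterpart. Setting $s = 1+\theta$ in equation \eqref{L-O} yields
\begin{equation*}
\frac{Z_K(1+\theta)}{1+\theta} \;=\; \frac{1}{g_K}\int_1^\infty \bigl(G_K(x)-x\bigr)\,x^{-2-\theta}\,dx \;-\; \frac{1}{(1+\theta)\,g_K},
\end{equation*}
and the boundary term is trivially $O(1)$. Under GRH, the standard explicit-formula estimate delivers the uniform bound
\begin{equation*}
|G_K(x) - x| \;\ll\; \sqrt{x}\,\log x\,\bigl(n_K \log x + g_K\bigr), \qquad x \ge 2,
\end{equation*}
with an absolute implied constant, in place of the exponentially decaying $\exp\bigl(-C_5\sqrt{\log x / n}\bigr)$ factor used previously.

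Next, I would substitute this bound into the integral and split the result into two pieces,
\begin{equation*}
\frac{n_K}{g_K}\int_1^\infty x^{-3/2-\theta}(\log x)^2\,dx \qquad \text{and} \qquad \int_1^\infty x^{-3/2-\theta}\,\log x\,dx.
\end{equation*}
Since the exponent $-3/2 - \theta$ is strictly less than $-1$ already at $\theta = 0$, both integrals converge and are bounded above by absolute constants uniformly for $\theta \in (0,1)$. It then suffices to observe that $n_K/g_K$ is bounded above uniformly in $K$, which follows at once from Minkowski's discriminant inequality $g_K \gg n_K$. Combining these two observations produces the desired $O(1)$ bound.

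The main subtlety I anticipate is justifying the stated GRH-conditional form of the bound on $|G_K(x)-x|$ with the correct uniform dependence on $K$. This reduces to feeding GRH together with the standard zero-counting estimate $N_K(T+1) - N_K(T) \ll g_K + n_K \log(|T|+2)$ into the explicit formula for $\psi_K(x) = G_K(x)$, so that the sum over nontrivial zeros contributes at most $\sqrt{x}$ times polylogarithmic factors in $x d_K$. Once this uniform bound is in place, the remainder of the argument is the routine integral evaluation sketched above, and no further input from the family structure of $\mathcal{K}$ is needed -- the lemma holds for every individual number field $K$, not merely for asymptotically good families.
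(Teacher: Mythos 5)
Your proposal is correct and follows essentially the same route as the paper: insert the GRH-conditional Lagarias--Odlyzko bound for $G_K(x)-x$ into the representation \eqref{L-O} at $s=1+\theta$ and observe that the resulting integral converges uniformly for $\theta\in(0,1)$. The only cosmetic difference is that the paper quotes the bound directly in the form $|G_K(x)-x|\le c\,g_K\,x^{1/2}(\log x)^2$, which already absorbs the $n_K\log x$ term via Minkowski's inequality $n_K\ll g_K$, whereas you carry the two terms separately and invoke Minkowski at the end.
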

\begin{proof}
For any number field $K$, from \eqref{L-O}, we have  
\begin{equation}\label{odlyz}
\frac{Z_K(s)}{s} = \frac{1}{g_K} \int_{1}^{\infty} (G_K(x) - x) x^{-s-1} dx - \frac{1}{sg_K},
\end{equation}
for $\Re(s) > 1$, where
\begin{equation*}
G_K(x) := \sum_{\substack{q,m \geq 1 \\ q^m \leq x}} N_q(K) \log q.
\end{equation*}
The Lagarias-Odlyzko estimate assuming GRH (see \cite[Theorem 9.1]{Lag}) for $G_K(x)$ gives
\begin{equation*}
|G_K(x)-x|\leq c \, g_K \, x^{1/2} \, (\log x)^2.
\end{equation*}
Using this estimate in \eqref{odlyz}, we get the lemma.
\end{proof}

\section{\bf Proof of main theorems}
\medskip

\subsection{\textit{Proof of Theorem \ref{gbs-good-tower}}}
Taking $\log$ on both sides of \eqref{zeta_k-around-1} and dividing by $g_K$, we get for $s = 1+\theta_K$
\begin{equation}\label{main}
\frac{\log \zeta_K (1+\theta_K)}{g_K} = \frac{\log \rho_K}{g_K} + \frac{\log F_K(1+ \theta_K)}{g_K} - \frac{\log \theta_K}{g_K}.
\end{equation}
Here, and in the rest of the paper, $\log$ is chosen to be the principal branch. In \cite{TV}, it is shown that for any asymptotically exact family of number fields,
\begin{equation*}
\limsup_{i\to\infty} \frac{\log \rho_{K_i}}{g_{K_i}} \leq \sum_q \phi_q \log \frac{q}{q-1}.
\end{equation*}
Therefore, in order to prove Theorem \ref{gbs-good-tower}, it suffices to show that for an asymptotically good tower $\mathcal{K} = \{K_i\}$ of number fields with solvable Galois closure,
\begin{equation*}
\liminf_{i\to\infty} \frac{\log \rho_{K_i}}{g_{K_i}} \geq \sum_q \phi_q \log \frac{q}{q-1}.
\end{equation*}
Hence by \eqref{main}, for a suitable choice of $\theta_{K_i} \to 0$, we are reduced to showing that,
\begin{equation}\label{zeta}
\liminf_{i\to \infty} \frac{\log \zeta_{K_i}(1+\theta_{K_i})}{g_{K_i}} \geq \sum_q \phi_q \log \frac{q}{q-1},
\end{equation}
\begin{equation}\label{F(s)}
\limsup_{i\to\infty} \frac{\log F_{K_i}(1+ \theta_{K_i})}{g_{K_i}} \leq 0,
\end{equation}
and
\begin{equation}\label{theta}
\lim_{i\to\infty} \frac{\log \theta_{K_i}}{g_{K_i}} = 0.
\end{equation}

We first prove \eqref{F(s)} and make our choice of $\theta_{K_i}$'s. From Lemma \ref{quad-fields}, it is clear that if we consider an asymptotically good tower of number fields $\{K_i\}$ where each $K_i$ has solvable Galois closure, there are at most finitely many of them with $\zeta_{K_i}$ having zeroes in the region \eqref{region}. So hereafter we will assume that our tower $\mathcal{K}$ does not have any number field $K$ such that $\zeta_K$ has a zero in the region \eqref{region}.

Choosing $\theta_K$ as
\begin{equation*}
\theta_K := g_K^{-(C_2+1) \log g_K},
\end{equation*}
using Lemma \ref{mainlemma} and the fact that $F_K(1)=1$, we get
\begin{equation*}
\frac{\log F_K(1+\theta_K)}{g_K} = \int_{0}^{\theta_K} Z_K(1+\theta) \,d\theta \ll g_K^{-\log g_K}.
\end{equation*}
Therefore, \eqref{F(s)} holds. Furthermore, we have
\begin{equation*}
\log \theta_{K_i} \ll (\log g_{K_i})^2.
\end{equation*}
Hence, we also get \eqref{theta}. For \eqref{zeta}, note that
\begin{equation*}
\frac{\zeta_{K_i}(1+\theta)}{g_{K_i}}  = \sum_q \frac{N_q(K_i)}{g_{K_i}} \log \frac{1}{1-q^{-1-\theta}} 
 = \sum_p  \frac{N_p(K_i)}{g_{K_i}} \log \frac{1}{1-p^{-1-\theta}} + \sum_{\substack{q=p^{k}, \\ k>1}} \frac{N_q(K_i)}{g_{K_i}} \log \frac{1}{1-q^{-1-\theta}}.
\end{equation*}
In a tower, we know that $\phi_p \leq \frac{N_p(K_i)}{g_{K_i}}$. Therefore,
\begin{equation*}
\sum_p  \frac{N_p(K_i)}{g_{K_i}} \log \frac{1}{1-p^{-1-\theta}} \geq \sum_p \phi_p  \log \frac{1}{1-p^{-1-\theta}},
\end{equation*}
for any $\theta>0$. We also have
\begin{equation*}
\sum_{\substack{q=p^{k},\\ k>1}} \frac{N_q(K_i)}{g_{K_i}} \log \frac{1}{1-q^{-1-\theta}} \to \sum_{\substack{q=p^{k}, \\ k>1}} \phi_q \log \frac{1}{1-q^{-1-\theta}} 
\end{equation*}
uniformly for $\theta > -\delta$, for some $\delta>0$. Hence, we get
\begin{equation*}
\liminf_{i\to \infty} \zeta_{K_i}(1+\theta_{K_i}) \geq \sum_q \phi_q \log \frac{q}{q-1}.
\end{equation*}

\subsection{\textit{Proof of Theorem \ref{gbs-bad-family}}}
Let $\mathcal{K} = \{K_i\}$ be an asymptotically bad family of number fields. If $K_i$'s do not have zeroes in the region \eqref{region}, then Lemma \ref{Stklem} gives
\begin{equation*}
\rho_{K_i} > c' (1- \beta) > c'  \frac{c}{n^{e(n)} \delta(n) \log d_K}.
\end{equation*}
Since $g_{K_i} \to \infty$ and $n_{K_i}/g_{K_i} \to 0$, we have the desired result.

Suppose some $K_i$ has zero in the region \eqref{region}, say $\beta_i$. Then, by Theorem \ref{Km}, there is a quadratic sub-field $k_i$ of $K_i$, which also has a zero at $\beta_i$. Now, using Theorem \ref{upbound} stated in Section 4, we get 
\begin{equation*}
\rho_{K_i} = \left(\frac{\rho_{K_i}}{\rho_{k_i}}\right) \rho_{k_i} \geq \left( \frac{c(1-\beta_i)}{(1-\beta_i) \left(  \frac{e g_{K_i}}{4}\right)^2} \right) \rho_{k_i}.
\end{equation*}

Taking $\log$ and dividing by $g_{K_i}$, we have
\begin{equation*}
\frac{\log \rho_{K_i}}{g_{K_i}} \geq \frac{\log{c  \left(  \frac{4}{e g_{K_i}}\right)^2}}{g_{K_i}} + \frac{\log \rho_{K_i}}{g_{K_i}}.
\end{equation*} 

Now using the classical Brauer-Siegel theorem for quadratic fields, we are done.

\subsection{\textit{Proof of Theorem \ref{GRH-effective}}}
We start with the equation \eqref{main}, as in the proof of Theorem \ref{gbs-good-tower}. Recall the definition
\begin{equation*}
Z_K(s) := \frac{d}{ds} \left(\frac{\log F_K(s)}{g_K}\right).
\end{equation*}
Using Lemma \ref{grheffect}, we have
\begin{equation}\label{term2}
\left|\frac{\log F_K(1+ \theta_K)}{g_K}\right| = \left|\int_0^{\theta_K} \left(\frac{\log F_K(1+ \theta)}{g_K}\right)' d\theta\right| \ll \theta_K.
\end{equation}


Since,
\begin{equation*}
    0< - \frac{\zeta_K'}{\zeta_K}(\sigma) \leq -n_K \frac{\zeta'}{\zeta}(\sigma) < \frac{n_K}{\sigma - 1} 
\end{equation*}
for $\sigma > 1$, using \cite[Lemma (a)]{Loub2}, we have
\begin{equation}\label{term1}
    0 < \frac{\log \zeta_K (1+\theta_K)}{g_K}  = \frac{1}{g_K} \int_{1+\theta_K}^{\infty} - \frac{\zeta_K'}{\zeta_K}(\sigma)\, d\sigma \, \leq \, \frac{n_K}{g_K} \int_{1+\theta_K}^{\infty} \frac{d\sigma}{\sigma-1}
    = \frac{n_K}{g_K} \log\left(\frac{1}{\theta_K}\right)
\end{equation}
Choosing
\begin{equation*}
\theta_K = \frac{\log g_K/n_K}{g_K/n_K},
\end{equation*}
and using \eqref{term1} and \eqref{term2}, we have
\begin{equation*}
\frac{\log \rho_{K}}{g_{K}} = O\left(\frac{\log g_K/n_K}{g_K/n_K}\right).
\end{equation*}

\section{\bf Bounds on regulators}
\medskip

As an application of the generalized Brauer-Siegel theorem, we follow the methods in \cite{TV} to produce some lower bounds on the regulators of number fields with solvable Galois closure in asymptotically good towers.  Proposition 7.1 of \cite{TV} states that for an asymptotically exact family $\mathcal{K} = \{K_i\}$ of number fields,
\begin{equation*}
    \limsup_{i\to\infty} \frac{\log {h_{K_i}}}{g_{K_i}} \leq 1- \left(\log 2\sqrt{\pi} + \frac{\gamma + 1}{2} \right) \phi_{\mathbb{R}} - \left(\log 4\pi + \gamma \right)\phi_{\mathbb{C}} + \sum_q \phi_q \log \frac{q}{q-1}.
\end{equation*}
Comparing this with the Theorem \ref{gbs-good-tower}, we have
\begin{theorem}
For an asymptotically good tower $\mathcal{K} = \{K_i\}$ of number fields with solvable Galois closure,
\begin{equation*}
    \liminf_{i\to\infty} \frac{\log R_{K_i}}{g_{K_i}} \geq \left(\log \sqrt{\pi e} + \frac{\gamma}{2}\right) \phi_{\mathbb{R}} + (\log 2 +\gamma) \phi_{\mathbb{C}}.
\end{equation*}
\end{theorem}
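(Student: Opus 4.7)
The plan is to extract the lower bound on the regulator from the class number formula by separating the product $h_{K_i} R_{K_i}$ into its two factors: the product is controlled exactly by Theorem \ref{gbs-good-tower}, while the class number is controlled from above by the cited Proposition 7.1 of \cite{TV}. Subtracting the two estimates will yield the claimed bound on $\liminf \log R_{K_i}/g_{K_i}$.

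Concretely, I would start from the identity
\begin{equation*}
\frac{\log R_{K_i}}{g_{K_i}} \;=\; \frac{\log h_{K_i} R_{K_i}}{g_{K_i}} \;-\; \frac{\log h_{K_i}}{g_{K_i}}.
\end{equation*}
By Theorem \ref{gbs-good-tower}, since $\mathcal{K}$ is an asymptotically good tower whose members have solvable Galois closure, the GBS conjecture holds, so the first term converges to $BS(\mathcal{K})$ as given in \eqref{BS1}. Using the elementary fact that $\liminf(a_n - b_n) \geq \lim a_n - \limsup b_n$ whenever $\lim a_n$ exists, I obtain
\begin{equation*}
\liminf_{i\to\infty} \frac{\log R_{K_i}}{g_{K_i}} \;\geq\; BS(\mathcal{K}) - \limsup_{i\to\infty}\frac{\log h_{K_i}}{g_{K_i}}.
\end{equation*}

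Now plugging in the formula \eqref{BS1} and the bound from Proposition 7.1 of \cite{TV} quoted just above the statement, the $\sum_q \phi_q \log\frac{q}{q-1}$ terms cancel and the constant $1$ cancels. The remaining $\phi_{\mathbb{R}}$-coefficient is
\begin{equation*}
-\log 2 + \log 2\sqrt{\pi} + \frac{\gamma+1}{2} \;=\; \log\sqrt{\pi} + \tfrac{1}{2} + \tfrac{\gamma}{2} \;=\; \log\sqrt{\pi e} + \tfrac{\gamma}{2},
\end{equation*}
and the $\phi_{\mathbb{C}}$-coefficient is
\begin{equation*}
-\log 2\pi + \log 4\pi + \gamma \;=\; \log 2 + \gamma,
\end{equation*}
matching the theorem exactly.

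There is essentially no obstacle: the only subtlety is justifying the inequality $\liminf(a_n-b_n) \geq \lim a_n - \limsup b_n$, which is routine. All heavy lifting has been done by Theorem \ref{gbs-good-tower} (which required the zero-free region of Murty and the machinery of Lemma \ref{mainlemma} and Lemma \ref{quad-fields}) and by Proposition 7.1 of \cite{TV}, both of which are available for the family under consideration.
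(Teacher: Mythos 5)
Your proposal is correct and is exactly the paper's argument: the paper simply "compares" Proposition 7.1 of \cite{TV} with Theorem \ref{gbs-good-tower}, i.e.\ subtracts the $\limsup$ bound on $\log h_{K_i}/g_{K_i}$ from the GBS limit of $\log h_{K_i}R_{K_i}/g_{K_i}$, and your coefficient computations check out.
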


\begin{center}
\section{CONCLUDING REMARKS}
\end{center}

The approach used in the proof of Theorem \ref{gbs-good-tower} and Theorem \ref{gbs-bad-family} does not give any information on the rate at which  $\log \rho_{K_i}/ g_{K_i}$ tends to its limit $\sum_q \phi_q \log q/(q-1)$. In \cite{Stk}, Stark showed that for an asymptotically bad family almost normal fields $K$ not containing any quadratic subfield,
\begin{equation*}
    \frac{\log \rho_K}{g_K} = O \left(\frac{\log g_K}{g_K} \right),
\end{equation*}
where the implied constant is independent of $K$. Hence, we get some information on the rate at which $\log \rho_{K_i} /g_{K_i}$ converges to $0$ in such a family. It is interesting to investigate a similar question in more generality. Unfortunately, no such result is known for asymptotically good families. For an asymptotically bad family of number fields with solvable Galois closure, one may use \cite[Theorem 3.1]{Km} to give a partial result for a large sub-class of these number fields. However, this question still remains open in general.\\

\section{\bf Acknowledgement}
\medskip

I would like to thank Prof. V. K. Murty for his insightful comments on an earlier version of this paper. I am also grateful to both the anonymous referees whose suggestions helped in improving the exposition significantly.

\end{document}